\def\Cal{\mathcal}
\def\P{{\Cal P}}
\def\S{{\Cal S}}
\def\bbr{{\mathbb R}}
\def\bbh{{\mathbb H}}
\def\rank{{\hbox{\rm rank}}}
\def\diag{{\hbox{\rm diag}}}
\def\cos{{\hbox{\rm cos}}}
\def\det{{\hbox{\rm det}}}
\def\min{{\hbox{\rm min}}}
\def\part{\partial}
\def\intl{\int\limits}
\def\b{\beta}
\def\Gam{\Gamma}
\def\a{\alpha}
\def\om{\omega}
\def\del{\delta}
\def\vp{\varphi}
\def\g{\gamma}
\def\gam{\gamma}
\def\Lam{\Lambda}
\def\sig{\sigma}
\def\lam{\lambda}
\font\frak=eufm10
\def\fr#1{\hbox{\frak #1}}
        \def\fra{\fr{a}}
        \def\frh{\fr{h}}
        \def\frk{\fr{k}}
\def\frM{\fr{M}}
        \def\frp{\fr{p}}
        \def\frq{\fr{q}}
        \def\fru{\fr{u}}
\def\Ma{\frM_{n,m}}
\def\S{\EuScript{S}}
\def\Ma{M_{n, m}}
\def\f0{f_0}
\def\Fc0{\varphi_0}
\def\rn{\bbr^n}
\def\vnm{V_{n,m}}
\def\p{\P_k}
\def\cos{{\hbox{\rm cos}}}
\def\det{{\hbox{\rm det}}}
\def\p{\P_m}
\def\gm{\Gamma_m}
\numberwithin{equation}{section}
\newcommand{\be}{\begin{equation}}
\newcommand{\ee}{\end{equation}}
\newcommand{\bea}{\begin{eqnarray}}
\newcommand{\eea}{\end{eqnarray}}
\newcommand{\Bea}{\begin{eqnarray*}}
\newcommand{\Eea}{\end{eqnarray*}}
\def\kl{K_\ell}
\def\rl{\bbr^\ell}
\def\rn{\bbr^n}
\def\rnl{\bbr^{n-\ell}}
\def\lv{{\boldsymbol \lam}}
\newtheorem{theorem}{Theorem}
\newtheorem{lemma}[theorem]{Lemma}
\newcommand{\SOn}{\mathrm{SO}(n)}
\newcommand{\GL}{\mathrm{GL}}
\newcommand{\SO}{\mathrm{SO}}
\newcommand{\SL}{\mathrm{SL}}
\def\sideremark#1{\ifvmode\leavevmode\fi\vadjust{\vbox to0pt{\vss
 \hbox to 0pt{\hskip\hsize\hskip1em
\vbox{\hsize2cm\tiny\raggedright\pretolerance10000
 \noindent #1\hfill}\hss}\vbox to8pt{\vfil}\vss}}}%
\theoremstyle{remark}
\numberwithin{equation}{section}
\begin{document}

\title[Invariant Functions]
{Invariant Functions on Grassmannians}

\author{Gestur \'Olafsson}
\address{Department of Mathematics, Louisiana State University, Baton Rouge,
LA, 70803 USA} \email{olafsson@math.lsu.edu}

\author{Boris Rubin}
\address{
Department of Mathematics, Louisiana State University, Baton Rouge,
LA, 70803 USA}

\email{borisr@math.lsu.edu}

\thanks{The first author was supported by the NSF grant
DMS 0402068. The second author was supported in part by the  NSF grant DMS-0556157
and the Louisiana EPSCoR program, sponsored  by NSF and the Board of
Regents Support Fund.}

\subjclass[2000]{Primary 44A12; Secondary 52A38}


\dedicatory{This article is dedicated to Professor Sigurdur Helgason
on the occasion of his 80th birthday}

\keywords{Grassmann manifolds, Stiefel manifolds, invariant
functions, bi-Stiefel decomposition}

\begin{abstract}
It is known, that every  function  on the unit sphere in $\bbr^n$,
which is invariant under rotations about some coordinate axis, is
completely determined by a function of one variable. Similar
results, when invariance of a function reduces dimension of its
actual argument, hold for every compact symmetric space and can be
obtained in the framework of Lie-theoretic consideration. In the
present
 article, this phenomenon is given precise meaning for functions  on the
Grassmann manifold $G_{n,i}$ of $i$-dimensional
 subspaces of $\bbr^n$, which are invariant under orthogonal transformations
  preserving complementary coordinate
subspaces of arbitrary fixed dimension. The corresponding integral
formulas are obtained. Our method relies on bi-Stiefel decomposition
and does not invoke Lie theory.
\end{abstract}

\maketitle

\section*{Introduction}

\setcounter{equation}{0}

\noindent Integral formulas for semisimple Lie groups and related
symmetric spaces constitute a core of geometric analysis. Many such
formulas are presented in  remarkable books by S. Helgason \cite
{H94}-\cite{H01a}. They are intimately connected with decompositions
of the corresponding Lie algebras and Haar measures, and amount to
pioneering works by H. Weyl, \'E. Cartan, Harish-Chandra; see
bibliographical notes in \cite[p. 231]{H00}. One of such important
formulas is related to the Cartan decomposition $G=KAK$. Its
generalization $G=KAH$ is due to Flensted-Jensen \cite{FJ2} in
 the noncompact case and Hoogenboom \cite{Hoo, Hoo2} for
$G$ compact.

To be more specific, let $U$ be a connected compact real semisimple
Lie group. Let $\theta$ and $\sig$ be two commuting involutions of
$U$, $$U^\theta =\{u\in U \mid \theta(u)=u\}\qquad \text{\rm
(similarly for $U^\sig$).}$$ Let $K$ and $H$ be closed subgroups of
$U$ such that \be\label{sym}(U^\theta)_0\subseteq K\subseteq
U^\theta\qquad\mathrm{and} \qquad (U^\sig)_0\subseteq H\subseteq
U^\sigma,\ee where the subscript ${}_o$ denotes the corresponding
connected component of the unity $e$. Subgroups, which obey
(\ref{sym}), are called symmetric and the quotient spaces $U/K$ and
$U/H$ are compact symmetric spaces. Our interest will be in the
double coset space $K \backslash U/H$.

For the Lie
algebra $\fru$ of $U$, we consider two Cartan decompositions $\fru=\frk +\frp$ and $\fru=\frh +\frq$, where $ \frp$ and $ \frq$
are $-1$ eigenspaces of differentials $d\theta$ and $d\sig$ in
$\fru$, respectively. Let $\fra$ be a maximal abelian subalgebra in
$ \frp \cap \frq$, $A=\exp (\fra)$. Let $M=Z_{K\cap H}(A)$ denote
the centralizer of $A$ in $K\cap H$. According to \cite[ formula
(4.12)]{Hoo}, there is a nonnegative function $\delta $ on $A$, that
can be expressed in terms of $\sin$ and $\cos$ functions, the
restricted roots of $\mathfrak{a}_\mathbb{C}$ in
$\mathfrak{u}_\mathbb{C}$, and the multiplicities, so that
\begin{equation}\label{eq-0.1}
\int_{U/H} f(uH)\, duH=c\int_{A} \int_{K/M} f(kaH) \,\del (a) \,dkM \,da, \qquad f \in C(U/H).
\end{equation}
The constant $c$ can be explicitly evaluated. Here, as elsewhere in
this article, except where clearly stated, the invariant measure on
a compact group is normalized to be one.

Formula (\ref{eq-0.1}) is a consequence of the corresponding
decomposition of the Haar measure $du$ on $U$; see \cite[ Theorem
4.7]{Hoo}. Fundamental results in this directions for  noncompact
Lie groups were first obtained by Berger \cite{Be}; for a modern
account,  see Flensted-Jensen \cite {FJ1, FJ2}, \cite{Ma}.  The
method by Hoogenboom \cite{Hoo}
 gives integral formulas both for the noncompact and compact cases.
 If $f$ is left $K$-invariant then (\ref{eq-0.1}) yields
\be\label{invf} \int_{U/H} f(uH)\, duH=c\int_{A} f_0(a) \,\del (a)
\,da \ee for some function $f_0$ on $A$. The map $f \to f_0$
preserves the smoothness (or integrability) of $f$ up to the weight
function $\del$. Formula (\ref{invf}) can be applied to  the study
of left $K$-invariant functions $f$ on the symmetric space $U/H$.

In the present article, we obtain
 explicit
 characterization of such functions,  when $U$ stands for the orthogonal
 group $\mathrm{O}(n)$,
 $U/H$ is the Grassmann manifold
 $$G_{n,i}=\mathrm{O}(n)/(\mathrm{O}(n-i) \times \mathrm{O}(i))=\SO (n)/\mathrm{S}(\mathrm{O}(n)\times \mathrm{O}(n-i))$$
  of
$i$-dimensional
 subspaces of $\bbr^n$, and the subgroup $K$ has the form
 \bea\label{kael} K\equiv \kl\!&=&\!\left \{\gam \in \mathrm{O}(n)
\, \Big |\, \, \gam\! =\! \left[\begin{array} {cc} \a & 0
\\ 0 & \b \end{array} \right], \quad \a  \!\in  \!\mathrm{O}(n-\ell), \; \b  \!\in \!
\mathrm{O}(\ell) \right \}\\&\sim&  \mathrm{O}(n-\ell) \times
\mathrm{O}(\ell).\nonumber \eea In this setting, $i$ and $\ell$ are
arbitrary
 integers, $1\le i, \ell \le n-1$.
Note that $U=O(n)$, $H=\mathrm{O}(n-i) \times \mathrm{O}(i)$,  and
$K=\mathrm{O}(n-\ell) \times \mathrm{O}(\ell)$ are not connected,
but one can show that (\ref{eq-0.1}) and (\ref{invf}) are still
valid. The subgroup $K_\ell$ is symmetric in the sense that
$K_\ell=\mathrm{O}(n)^{\theta_\ell}$, where the involution
$\theta_\ell $  is defined by
$$\theta_\ell (u)=I_{n-\ell,\ell}uI_{n-\ell,\ell}; \qquad \; u \in
O(n), \quad I_{n-\ell,\ell}=\left[\begin{array} {cc} I_{n-\ell} & 0
\\ 0 & -I_\ell \end{array} \right].$$ In fact, one can readily see that
$$\theta_\ell\left(\left[\begin{array} {cc}  A & B \\ C& D\end{array}\right]\right )=
\left[\begin{array} {cc}  A & -B \\ -C& D\end{array}\right].
$$
Since $I_{n-\ell,\ell}$ and $I_{n-i,i}$ commute, the involutions
$\theta_\ell$ and $\theta_i$ commute too, and the results by
Hoogenboom can be applied.

Unlike the Lie theoretic argument sketched above, our consideration
does not invoke the Lie-theoretic techniques and is motivated by
application to problems in convex geometry, dealing with sections of
star-shaped bodies with symmetries; see, e.g., \cite {R}.
 We  also recall, that if
one would like to use the representation of noncompact semisimple
Lie groups, in particular the principal series representations and
intertwining operators, then $G_{n,i}$ can also be written as $\SL
(n,\mathbb{R})/P_i$, where $P_i$ is the parabolic subgroup
$$P_i=\left\{\left[\begin{array} {cc}  A & X\\
0 & B\end{array}\right]\in\SL (n,\bbr)\, \Big|\, A\in \GL (i,\bbr
),\, B\in \GL (n-i,\bbr) \, ,\,\, X\in\frM_{i,n-i}\right\}$$ and
$\frM_{r,s}\simeq \bbr^{rs}$ stands for the space of $r\times s$
real matrices.

Let us explain the idea of the paper by the simple example
$$ S^{n-1}=\SOn /\SO(n-1)=\mathrm{O}(n)/\mathrm{O}(n-1),$$ where $S^{n-1}$ is the unit sphere in
$\bbr^n$ with the area $\sigma_{n-1}=2\pi^{n/2}/\Gamma (n/2)$. We
fix  an integer $\ell$, $1\le \ell\le n-1$, and  write
\be\label{ooo} \rn=\rnl \oplus \rl, \qquad
\rnl=\overset{n-\ell}{\underset{j=1}{\bigoplus}}\,\bbr e_j, \qquad
\rl=\overset{n}{\underset{j=n-\ell +1}{\bigoplus}}\,\bbr e_j,\ee
where $ \, e_1, e_2, \ldots , e_n$ are the  coordinate unit vectors.
 According to (\ref{ooo}), every point $\theta \in S^{n-1}$ can be
represented in bi-spherical coordinates
as
\be\label{bsph} \theta=\left[\begin{array} {c} u\, \sin \om \\
v\, \cos \,\om \end{array} \right], \qquad u\in S^{n-\ell-1}, \quad
v\in S^{\ell-1}, \quad 0\le \om \le \frac{\pi}{2},\ee so that $
d\theta \!=\!\sin^{n-\ell-1} \om \,\cos^{\ell-1}\om \,du dv d\om$,
where $d\theta, du$, and $dv$ denote the relevant (non-normalized)
volume elements; see, e.g., \cite{VK2}. Clearly, $\cos^{2}\om
\!=\!\theta^t\sig\sig^t\theta\!=\!\theta^t{\rm Pr}_{\rl}\theta$,
where ${\rm Pr}_{\rl}$ denotes the orthogonal projection onto $\rl$
and \be\label{eell}
\sig=[e_{n-\ell +1},\ldots , e_n]=\left[\begin{array} {c} 0 \\
I_\ell \end{array} \right].\ee The following statement is an
immediate consequence of (\ref{bsph}).
\begin{theorem}\label{teo1} For $s\in [0,1]$, let $$ d\nu (s)=s^{\ell/2-1}
(1-s)^{(n-\ell)/2-1} \, ds.$$ An integrable function $f$ on
$S^{n-1}$ is $\kl$-invariant if and only if there is a function
$f_0\in L^1([0,1]; d\nu)$ such that $f(\theta)=f_0(s)$, where
$s^{1/2}=(\theta^t{\rm Pr}_{\rl}\theta)^{1/2}$ is the cosine of the
angle between the unit vector $\theta$ and the coordinate subspace
$\rl$. Moreover,  \bea \intl_{S^{n-1}}f(\theta)\,
d\theta&=&c\,\int_0^{\pi/2} f_0(\cos^{2}\om) \, \sin^{n-\ell-1} \om
\,\cos^{\ell-1}\om \, d\om\nonumber
\\\label{gn1}&=&\frac{c}{2}\,\int_0^{1} f_0 (s) \, d\nu (s), \qquad c=\sig_{\ell
-1}\sig_{n-\ell -1}.\eea
\end{theorem}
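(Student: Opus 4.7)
The plan is to read everything off from the bi-spherical parametrization (\ref{bsph}). The key observation is that $\kl$ acts on $\theta\in S^{n-1}$ by $\theta\mapsto\gam\theta$ with $\gam=\diag(\a,\b)$, $\a\in \mathrm{O}(n-\ell)$, $\b\in\mathrm{O}(\ell)$; applied to $\theta=(u\sin\om, v\cos\om)^t$ this gives $(\a u\sin\om, \b v\cos\om)^t$, i.e., $\om$ is fixed while $(u,v)\in S^{n-\ell-1}\times S^{\ell-1}$ is sent to $(\a u,\b v)$. Since $\mathrm{O}(n-\ell)\times \mathrm{O}(\ell)$ acts transitively on $S^{n-\ell-1}\times S^{\ell-1}$, a measurable $f$ on $S^{n-1}$ is $\kl$-invariant if and only if it depends only on $\om$, i.e., $f(\theta)=g(\om)$ a.e.\ for some function $g$ on $[0,\pi/2]$. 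Setting $s=\cos^2\om=\theta^t{\rm Pr}_{\rl}\theta$ and $f_0(s)=g(\arccos\sqrt s)$ yields the stated representation $f(\theta)=f_0(s)$.

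For the integral formula, I would plug this $f$ into the volume-element identity $d\theta=\sin^{n-\ell-1}\om\,\cos^{\ell-1}\om\,du\,dv\,d\om$ and integrate out $u$ and $v$, which contributes $\sig_{n-\ell-1}\sig_{\ell-1}=c$:
\[
\intl_{S^{n-1}} f(\theta)\, d\theta=c\intl_0^{\pi/2} f_0(\cos^2\om)\,\sin^{n-\ell-1}\om\,\cos^{\ell-1}\om\,d\om.
\]
The substitution $s=\cos^2\om$, $ds=-2\sin\om\cos\om\,d\om$, converts $\sin^{n-\ell-1}\om\,\cos^{\ell-1}\om\,d\om$ into $\tfrac12 s^{\ell/2-1}(1-s)^{(n-\ell)/2-1}\,ds=\tfrac12 d\nu(s)$ (the limits reverse, cancelling the minus sign), giving the second equality in (\ref{gn1}).

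Finally, the $L^1$ equivalence between $f$ on $S^{n-1}$ and $f_0$ against $d\nu$ is a direct consequence of the above factored measure decomposition applied to $|f|$, via Fubini. I don't anticipate any serious obstacle: once the action of $\kl$ is written in bi-spherical coordinates the invariance characterization is immediate, and the remaining content is an elementary change of variable. The only minor care needed is that the identification $f(\theta)=f_0(s)$ holds only a.e., but this is harmless since the underlying measure $d\theta$ pulls back to $du\,dv\,d\om$ times a weight that is strictly positive on $(0,\pi/2)$, and the boundary $\om\in\{0,\pi/2\}$ has measure zero.
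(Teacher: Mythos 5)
Your proposal is correct and follows exactly the route the paper intends: the paper gives no separate proof, stating only that the theorem is an immediate consequence of the bi-spherical parametrization (\ref{bsph}), and your argument (transitivity of $\mathrm{O}(n-\ell)\times\mathrm{O}(\ell)$ on $S^{n-\ell-1}\times S^{\ell-1}$ forces dependence on $\om$ alone, followed by the substitution $s=\cos^2\om$) is precisely the intended computation. No gaps.
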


\section{Main results}
\noindent Let  $G_{n,i}$  be the Grassmann manifold of
$i$-dimensional linear subspaces  $\xi$ of $\bbr^n$, $1\le i\le
n-1$.  It is assumed, that $G_{n,i}$ is endowed with the
$O(n)$-invariant measure $d\xi$ of total mass $1$. For $1\le \ell
\le n-1$ let $m=\min\{i,\ell\}$. We will need the simplex
\be\label{ssim} \Lam_m\!=\!\{\lv\!=\!(\lam_1, \dots , \lam_m)\mid
1\ge\lam_1\ge \dots \ge\lam_m\ge 0\}, \ee and the  Siegel gamma
function \be\label{2.444}
 \gm (\a)=\pi^{m(m-1)/4}\prod\limits_{j=0}^{m-1} \Gam (\a- j/2). \ee
To every subspace $\xi \in G_{n,i}$, we assign a point $\lv=(\lam_1,
\dots , \lam_m)$ in $\Lam_m$, so that $\lam_1, \dots , \lam_m$ are
eigenvalues of the positive semi-definite matrix
\be\label{forg}r=\left\{
 \begin{array} {ll} \Theta^t{\rm Pr}_{\rl}\Theta & \mbox{if $ i\le \ell$
 ,}\\{}\\
\Psi^t{\rm Pr}_{\xi}\Psi& \mbox{if $ i>\ell$}.\\
  \end{array}\right.\ee
 Here $\Theta =(\theta_{i,j})_{n\times i}$ and $\Psi=(\psi_{i,j})_{n\times \ell}$ are arbitrary fixed
 matrices whose columns form an orthonormal basis in $\xi$ and $\rl$, respectively; $\Theta^t$ and $\Psi^t$ are the corresponding
 transposed matrices; ${\rm Pr}_{\xi}$ and ${\rm Pr}_{\rl}$
 denote the relevant orthogonal projections. Clearly, $\lv$ is
 independent of the choice of orthonormal bases in $\xi$ and $\rl$.

\begin{theorem}\label{teo} Assume that $ 1\le i,\ell \le n-1$ are such that $i+\ell \le n$.
Let $m=\min \{i,\ell\}$.
For  $\lv\in \Lam_m$, we set
$$ d\nu (\lv)=\prod\limits_{1\le
 j<k\le m}(\lam_j-\lam_k)\prod\limits_{j=1}^m \lam_j^\a (1-\lam_j)^\b d
\lam_j,
$$
$$ \a=(n-\ell-i-1)/2, \qquad \b=(|\ell -i| -1)/2.$$
An integrable function $f$ on $G_{n,i}$ is $K_\ell$ -invariant if
and only if there is a function $f_0 \in L^1(\Lam_m ; d\nu )$  such
that $f(\xi)=f_0 (\lv)$, where $\lv$ is formed by eigenvalues of
matrix (\ref{forg}). Moreover,
\be\label{sss}\int_{G_{n,i}}f(\xi)\, d\xi=c \int_{\Lam_m}f_0 (\lv)\, d\nu
(\lv),\ee where \be c=c_m \, \left\{
\begin{array}{lcl}\displaystyle{
\Gamma_i (n/2)/\,\Gamma_i (\ell/2)\, \Gamma_i ((n-\ell)/2)} &
\text{if} &i\le \ell, \\
{}\\
\displaystyle{  \Gamma_\ell (n/2)/\,\Gamma_\ell (i/2)\,
\Gamma_\ell((n-i)/2)} & \text{if} &i\ge \ell,
 \end{array}
\right. \ee
$$ c_m=\pi^{(m^2+m)/4}\Big (\prod\limits_{j=1}^m
j\;\Gam(j/2)\Big )^{-1}. $$
\end{theorem}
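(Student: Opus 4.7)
The plan is to lift the integration from $G_{n,i}$ to the Stiefel manifold $V_{n,i}$ using $G_{n,i} = V_{n,i}/\mathrm{O}(i)$ (right action by change of orthonormal frame), and then to parametrize $V_{n,i}$ by a two-step decomposition tuned to the splitting $\bbr^n = \bbr^{n-\ell} \oplus \rl$. The form of the matrix $r$ in (\ref{forg}) already indicates the invariants to be extracted. I treat the case $i \le \ell$ in detail; the case $i > \ell$ is handled by the symmetric argument that exchanges the frame $\Theta$ of $\xi$ with the canonical frame $\Psi$ of $\rl$, using the fact that $\Theta^t\Psi\Psi^t\Theta$ and $\Psi^t\Theta\Theta^t\Psi$ share the same nonzero spectrum.

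Write $\Theta \in V_{n,i}$ in block form $\Theta = \binom{A}{B}$ with $A \in M_{n-\ell,i}$, $B \in M_{\ell,i}$, and $A^tA + B^tB = I_i$. The \emph{bi-Stiefel decomposition} (applicable because $\ell \ge i$) parametrizes $V_{n,i}$, up to a null set, as $\{A : A^tA < I_i\} \times V_{\ell,i}$ via $(A, U_2) \mapsto \binom{A}{U_2(I_i - A^tA)^{1/2}}$, and a standard Jacobian computation gives
$$d\Theta = \det(I_i - A^tA)^{(\ell - i - 1)/2}\, dA\, dU_2.$$
The rectangular singular value decomposition $A = U_1 \Lambda^{1/2} V^t$ with $U_1 \in V_{n-\ell, i}$, $V \in \mathrm{O}(i)$, $\Lambda = \diag(\lambda_1,\ldots,\lambda_i)$ then yields the classical identity
$$dA = 2^{-i}\prod_{1\le j<k\le i}(\lambda_j - \lambda_k)\prod_{j=1}^{i}\lambda_j^{(n-\ell-i-1)/2}\, d\lambda\, dU_1\, dV.$$
Substituting and using $\det(I_i - A^tA) = \prod_j(1-\lambda_j)$ produces an explicit product decomposition of $d\Theta$ whose $\lv$-component is exactly $\prod_{j<k}(\lambda_j-\lambda_k)\prod_j\lambda_j^\a(1-\lambda_j)^\b\, d\lambda$ with the exponents $\a, \b$ of the theorem.

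Passing from $V_{n,i}$ to $G_{n,i}$ integrates out the factor $V \in \mathrm{O}(i)$, since the right $\mathrm{O}(i)$-action on $\Theta$ pulls back via the SVD to $V \mapsto g^t V$. The hypothesis $i+\ell \le n$ guarantees that $V_{n-\ell,i}$ and $V_{\ell,i}$ are nonempty, and that $K_\ell = \mathrm{O}(n-\ell)\times\mathrm{O}(\ell)$ acts transitively on their product via $(U_1, U_2) \mapsto (\alpha U_1, \beta U_2)$; consequently any $K_\ell$-invariant $f \in L^1(G_{n,i})$ factors as $f(\xi) = f_0(\lv)$, and integrating the product measure over $U_1, U_2, V$ against their invariant measures yields (\ref{sss}). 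The constant $c$ is assembled from the Stiefel volumes $\vol(V_{d,i}) = 2^i\pi^{di/2}/\Gamma_i(d/2)$ for $d\in\{n-\ell,\ell,n\}$, the volume of $\mathrm{O}(i)$, the $2^{-i}$ from the SVD, and the normalization by $\vol(V_{n,i})$ that renders $G_{n,i}$ of total mass one; Siegel gamma identities then recombine these into the stated form of $c$. The principal obstacle is the bookkeeping: both Jacobian formulas are classical, but combining them and threading every factor of $2^i$, $\pi$, and $\Gamma_k$ through the quotient requires careful execution.
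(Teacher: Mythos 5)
Your proposal is correct and follows essentially the same route as the paper: lift $f$ to a right $\mathrm{O}(i)$-invariant function on $V_{n,i}$, apply the bi-Stiefel decomposition adapted to $\bbr^{n-\ell}\oplus\bbr^\ell$ (your identity $d\Theta=\det(I_i-A^tA)^{(\ell-i-1)/2}dA\,dU_2$ is exactly the paper's Lemma \ref{l2.1} with the polar decomposition of the top block undone), and then extract the eigenvalues — you merely fuse the paper's two steps (polar decomposition $A=u_1r^{1/2}$ followed by diagonalization of $r$) into a single SVD of $A$. Your one-line reduction of the case $i>\ell$ via the equality of nonzero spectra of $\Theta^t\Psi\Psi^t\Theta$ and $\Psi^t\Theta\Theta^t\Psi$ is also the paper's mechanism (it converts the integral into one over $V_{n,\ell}$, where bi-Stiefel applies), though the paper spends considerably more effort there verifying that the invariances actually force $\vp(\Theta)$ to factor through $\sig^t{\rm Pr}_\xi\sig$ and its eigenvalues.
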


The geometrical meaning of  $\lam_1, \dots , \lam_m$ in the equality
$$f(\xi)=f_0 (\lv)\equiv f_0 (\lam_1, \dots , \lam_m)$$ is that $
\lam_1=\cos^2 \om_1, \dots , \lam_m=\cos^2 \om_m$, where $\om_1,
\ldots, \om_m$ are canonical angles, which determine
 the relative position of a subspace $\xi \in G_{n,i}$ with
respect to the coordinate subspace $ \rl$; see, e.g., \cite{C,J}.

The proof of Theorem \ref{teo} relies on the bi-Stiefel
decomposition of the Haar  measure on the Stiefel manifold \cite{GR,
Herz}; see Lemma \ref{l2.1}.  A simple proof of it, presented in
Section 2, is an adaptation of the argument of Zhang \cite{Zh} to
the real case.

We conjecture, that our method extends to the hyperbolic case
$\bbh^n$, when $i$-dimensional planes are substituted by
$i$-dimensional totally geodesic submanifolds of $\bbh^n$. We plan
to study this case in the context of related problems of integral
geometry in forthcoming publications.

\section{The Stiefel manifold and more notation}
\noindent
As before, $\frM_{n,m} \simeq
\bbr^{nm}$ denotes the space of real matrices $x=(x_{i,j})$ having $n$
rows and $m$
 columns;  $dx=\prod^{n}_{i=1}\prod^{m}_{j=1}
 dx_{i,j}$ is the volume element on $\Ma$. Given a square matrix
 $a$,
let $|a|:=|\det (a)|$. Let $\S_m \simeq \bbr^{m(m+1)/2}$  be the space of $m \times m$ real
symmetric matrices $s=(s_{i,j})$
 with the volume element $ds=\prod_{i \le j} ds_{i,j}$. We denote by  $\p\subset \S_m$  the cone of
positive definite matrices in $\S_m$. Given $a$ and  $b$ in  $S_m$,
the symbol $\int_a^b f(s)\, ds$ denotes the integral over the
compact set $(a +\p)\cap (b -\p)$ and $\int_a^\infty f(s)\, ds$
means the integral over  $a +\p$. The group $G=\mathrm{GL}(m,\bbr)$
acts transitively on $\p$ by the rule $g\cdot r := g rg^t$, $g \in
G$.  The corresponding $G$-invariant measure on $\p$ is
\be\label{2.1} d_{*} r = |r|^{-d} dr,  \qquad d= (m+1)/2, \ee
  \cite[p. 18]{T}.
For $n\geq m$, let $\vnm= \{v \in \frM_{n,m}\mid  v^tv=I_m \}$
 be the Stiefel manifold  of orthonormal $m$-frames in $\bbr^n$.
 The group $\mathrm{O}(n)$
 acts transitively on $\vnm$  by the rule $\gam : v\to \gam v, \; \gam \in
\mathrm{O}(n)$, in the sense of matrix multiplication. Let
\begin{equation}\label{sigmam}
\sigma_m=\left[\begin{array} {c} 0 \\
I_m\end{array} \right].
\end{equation}
Most of the time we simply write $\sigma$ for $\sigma_m$. The
stabilizer of $\sigma$ is
$$\mathrm{O}(n-m)\simeq \left\{\left[ \begin{array} {cc}A & 0\\
0 & I_m\end{array}\right]\, \Big|\, A\in \mathrm{O}(n-m)\right\}\,
.$$ Hence $V_{n,m}=\mathrm{O}(n)/\mathrm{O}(n-m)$. We  fix the
corresponding $\mathrm{O}(n)$-invariant measure $dv$ on $\vnm$ so
that
   \be\label{2.16}
 \int_{\vnm} dv = \sigma_{n,m}=\frac {2^m \pi^{nm/2}} {\gm
 (n/2)}, \ee
 $\gm  (\cdot)$ being the  Siegel gamma
function (\ref{2.444});
  see \cite[p. 70]{Mu},   \cite[p.
 57]{J}, \cite[ p. 351]{FK}. The measure $dv$ is also right $\mathrm{O}(m)$-invariant.

Let
$$\frM_{n,m}^*=\{x\in \frM_{n,m}\mid \rank (x)=m\}\, .$$
Then $\frM_{n,m}^*$ is open, dense and of full measure in
$\frM_{n,m}$. Define
\begin{equation}\label{pdo}
\varphi : V_{n,m}\times \p\to \frM_{n,m}^*\, ,\quad (v,r)\mapsto
x=vr^{1/2}\, .
\end{equation}
Then $\varphi $ is surjective and $r=x^t x\in\p$ depends smoothly on
$x$.  The following lemma describes the measure $dx$ on $\frM_{n,m}$
in terms of $V_{n,m}$ and $\p$.

\begin{lemma}\label{l2.3} Assume that $n\ge m$. Let
the notation be as above. Then
$$dx=2^{-m} |r|^{n/2} d_*r \,dv\, .$$
\end{lemma}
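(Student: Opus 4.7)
The map $\varphi$ is a diffeomorphism from $V_{n,m}\times \P_m$ onto the open dense subset $\frM_{n,m}^*$, so $\varphi^*(dx)=J(v,r)\,dv\,dr$ for some smooth positive function $J$ on $V_{n,m}\times \P_m$. The plan is to use two invariance arguments to force $J$ into the shape $C\,|r|^{(n-m-1)/2}$, and then pin down the constant $C$ by a Gaussian test.

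First I would eliminate the dependence on $v$. The left action $x\mapsto \gamma x$ of $\mathrm{O}(n)$ on $\frM_{n,m}$ corresponds under $\varphi$ to $(v,r)\mapsto(\gamma v,r)$, and both $dx$ and $dv$ are $\mathrm{O}(n)$-invariant. Hence $J(\gamma v,r)=J(v,r)$ for every $\gamma\in \mathrm{O}(n)$, and since $\mathrm{O}(n)$ acts transitively on $V_{n,m}$, we conclude $J(v,r)=J(r)$.

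Next, I would extract the functional form of $J(r)$ via the right $\mathrm{GL}(m,\bbr)$-action $x\mapsto xg$, which rescales $dx$ by $|g|^n$. In bi-Stiefel coordinates this action reads
$$(v,r)\mapsto \bigl(v\,u(r,g),\,g^t r g\bigr),\qquad u(r,g)=r^{1/2}g(g^t r g)^{-1/2}\in \mathrm{O}(m),$$
with $u(r,g)$ independent of $v$, so the induced map on $V_{n,m}\times \P_m$ is block-triangular. Right $\mathrm{O}(m)$-invariance of $dv$ and the standard Jacobian $|g|^{m+1}$ of $r\mapsto g^t r g$ on $\S_m$ then give the functional equation $J(g^t r g)=|g|^{n-m-1} J(r)$. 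Since $|r|^{(n-m-1)/2}$ obeys the same equation and $\mathrm{GL}(m,\bbr)$ acts transitively on $\P_m$, we obtain $J(r)=C\,|r|^{(n-m-1)/2}$.

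Finally I would pin down $C$ by testing against the Gaussian $f(x)=\exp(-\tr(x^t x))$. Direct integration gives $\int_{\frM_{n,m}} f(x)\,dx=\pi^{nm/2}$. On the other side, Fubini combined with the Siegel gamma identity $\int_{\P_m} e^{-\tr r}|r|^{(n-m-1)/2}\,dr=\gm(n/2)$ (which is (\ref{2.444}) with parameter $n/2$) and the value of $\sigma_{n,m}$ in (\ref{2.16}) produce $C\,\sigma_{n,m}\,\gm(n/2)=C\cdot 2^m\pi^{nm/2}$. Equating yields $C=2^{-m}$, and rewriting $|r|^{(n-m-1)/2}\,dr=|r|^{n/2}\,d_*r$ via (\ref{2.1}) gives the stated formula. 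The main obstacle in this route is the bookkeeping in the third paragraph, specifically checking that $u(r,g)$ really is $v$-independent so that the Jacobian of the induced map is exactly $|g|^{m+1}$ with no extraneous factors from cross-derivatives.
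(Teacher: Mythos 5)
Your proof is correct, but there is nothing in the paper to compare it against: the authors state Lemma \ref{l2.3} without proof, remarking only that the polar decomposition can be found in Herz, Gross--Kunze, Muirhead, and Faraut--Travaglini. Your argument is a legitimate self-contained derivation of the standard fact they import. All three steps check out: $\varphi$ is indeed a diffeomorphism onto $\frM_{n,m}^*$ (injectivity follows since $r=x^tx$ and $v=xr^{-1/2}$ are recovered from $x$); left $\mathrm{O}(n)$-invariance kills the $v$-dependence of the density; the right $\GL(m,\bbr)$-action gives $J(g^trg)=|g|^{n-m-1}J(r)$ and hence $J(r)=C|r|^{(n-m-1)/2}$ by transitivity on $\P_m$; and the Gaussian test with $\int_{\P_m}e^{-\tr r}|r|^{n/2-(m+1)/2}\,dr=\gm(n/2)$ together with the normalization (\ref{2.16}) forces $C=2^{-m}$. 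One small remark on the point you flag as the main obstacle: for the Jacobian to factor you do not actually need $u(r,g)$ to be independent of $v$; block-triangularity of the differential requires only that the $r$-component $r\mapsto g^trg$ of the induced map be independent of $v$, which is automatic, so the cross-derivative $\partial(vu(r,g))/\partial r$ never enters the determinant. The $v$-independence of $u(r,g)$ is still what lets you invoke right $\mathrm{O}(m)$-invariance of $dv$ cleanly, so the observation is not wasted, merely stronger than necessary.
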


The polar decomposition (\ref{pdo}) can be found in many sources,
e.g., in \cite[ p. 482]{Herz}, \cite[p. 93]{GK1}, \cite[pp. 66,
591]{Mu}, \cite[p. 130]{FT}.

The next statement, which is actually due to Zhang \cite{Zh},
contains a higher-rank generalization of the polar decomposition of
the Lebesgue measure in the quarter-plane.
 \begin{lemma}\label{l2.3z}Let $F$ be a function on $\p \times  \p, \;
 d=(m+1)/2$. Then
\bea\label{eit} &&\int_{\p \times  \p} F(p_1, p_2)\,d_*p_1\,
d_*p_2\\&=&\int_0^{I_m}|I_m -r|^{-d}\,d_*r\int_{\p}  F(s^{1/2}r
s^{1/2}, s^{1/2}(I_m -r)s^{1/2})\,d_*s \nonumber\eea provided that
either of these integrals exists in the Lebesgue sense.
\end{lemma}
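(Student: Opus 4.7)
The plan is to perform a two-stage change of variables on $\P_m \times \P_m$. Set $s = p_1 + p_2$ and $r = s^{-1/2} p_1 s^{-1/2}$; then the map $(p_1, p_2) \mapsto (r, s)$ is a bijection onto $\{r \in \P_m : I_m - r \in \P_m\} \times \P_m$, with inverse $p_1 = s^{1/2} r s^{1/2}$, $p_2 = s^{1/2}(I_m - r) s^{1/2}$. The whole content of the lemma is the claim that, in these coordinates, $d_* p_1 \, d_* p_2 = |I_m - r|^{-d}\, d_* r \, d_* s$.

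I would carry this out in two steps. First, write $d_* p_i = |p_i|^{-d} dp_i$ and, for fixed $p_1$, translate $p_2 \mapsto s := p_1 + p_2$. Since this is an ordinary translation on the ambient space $\S_m$, one has $dp_2 = ds$, and the new range of $s$ is $p_1 + \P_m$. Applying Tonelli to $|F|$ (which also ensures that either side existing implies the other does) lets me swap the order of integration, so the left side of (\ref{eit}) becomes
\begin{equation*}
\int_{\P_m} ds \int_0^s F(p_1,\, s - p_1)\, |p_1|^{-d}\, |s - p_1|^{-d}\, dp_1.
\end{equation*}
Second, at fixed $s$, substitute $p_1 = s^{1/2} r s^{1/2}$; this sends $\{0 < p_1 < s\}$ bijectively onto $\{0 < r < I_m\}$. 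The Jacobian of the linear map $r \mapsto g r g^t$ on $\S_m$ with $g = s^{1/2}$ equals $|g|^{m+1} = |s|^d$, which is precisely the $\GL(m,\bbr)$-invariance of $d_* r = |r|^{-d} dr$ recorded in (\ref{2.1}). Together with the factorizations $|p_1| = |s|\,|r|$ and $|s - p_1| = |s|\,|I_m - r|$, the inner integral reduces to
\begin{equation*}
|s|^{-d} \int_0^{I_m} F\bigl(s^{1/2} r s^{1/2},\, s^{1/2}(I_m - r) s^{1/2}\bigr)\, |I_m - r|^{-d}\, d_* r.
\end{equation*}

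Folding the remaining $|s|^{-d} ds$ into $d_* s$ and interchanging the orders of integration once more (again by Tonelli) yields exactly (\ref{eit}). The only delicate point is the Jacobian formula for the symmetric-matrix map $r \mapsto s^{1/2} r s^{1/2}$; although it is classical (one can verify it first for diagonal $g$ and then propagate by orthogonal conjugation, which preserves both the Lebesgue measure on $\S_m$ and $|s|$), it is the single place where the exponent $d = (m+1)/2$ and the invariance of $d_* r$ genuinely enter. Everything else is routine bookkeeping with the positive-definite ordering on $\P_m$.
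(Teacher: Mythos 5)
Your proposal is correct and follows essentially the same route as the paper's proof: translate $p_2\mapsto s=p_1+p_2$, apply Tonelli to reverse the order of integration, and then substitute $p_1=s^{1/2}rs^{1/2}$ at fixed $s$, using the $\GL(m,\bbr)$-invariance of $d_*$ together with $|p_1|=|s|\,|r|$ and $|s-p_1|=|s|\,|I_m-r|$ to fold the leftover $|s|^{-d}$ into $d_*s$. The only cosmetic difference is that you spell out the Jacobian $|g|^{m+1}$ of $r\mapsto grg^t$ explicitly, whereas the paper invokes the invariance of $d_*r$ directly.
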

\begin{proof}
 \bea {\rm l.h.s.}&=&\int_{\p} d_*p_1  \int_{\p} F(p_1, p_1+p_2-p_1)|p_2|^{-d}\,dp_2
 \qquad \text{(set $p_1+p_2=s$)}\nonumber\\ &=&\int_{\p} d_*p_1
 \int_{p_1}^\infty F(p_1, s-p_1)|s-p_1|^{-d}\, ds\nonumber\\ &=&
 \int_{\p}ds\int_0^s  F(p_1, s-p_1)|s-p_1|^{-d}\,d_*p_1\qquad \text{(set $p_1=s^{1/2}r s^{1/2}$)}\nonumber\\ &=&
\int_{\p}d_*s\int_0^{I_m} F(s^{1/2}r s^{1/2}, s^{1/2}(I_m
-r)s^{1/2})\,|I_m -r|^{-d}d_*r,\nonumber\eea and (\ref{eit})
follows.
\end{proof}

\begin{lemma}\label{l2.1} {\rm (bi-Stiefel decomposition)} Let $k$,
$m$, and $n$  be positive integers satisfying $$1 \le k  \le n-1,
\qquad 1 \le m  \le n-1, \qquad m\le \min (k, n-k).$$
 Almost all matrices $v \in \vnm$ can be
represented in the form
 \be\label{herz1}
  v= \left[\begin{array} {cc} u_1 r^{1/2}
\\u_2 (I_m -r)^{1/2}\end{array}
 \right], \qquad  u_1\in V_{n-k,m}, \quad u_2 \in V_{k,m},
 \ee
  so that
   \be\label{2.11}
 \intl_{\vnm} f(v)\, dv
 \!=\!\intl_0^{I_m} d\mu(r) \!\intl_{ V_{n-k, m}}\! du_1 \intl_{ V_{k,
 m}}\!f \left
(\left[\begin{array} {cc} u_1 r^{1/2}
\\u_2 (I_m \!-\!r)^{1/2}\end{array}
 \right]\right )\, du_2, \ee
 \be\nonumber
 d\mu (r)=2^{-m}|I_m
 -r|^{(k-m-1)/2}|r|^{(n-m-k-1)/2}\, dr.
 \ee
\end{lemma}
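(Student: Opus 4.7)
The plan is to parameterize the set $\frM_{n,m}^*$ of full-rank $n\times m$ matrices in two different ways and to extract the formula for $dv$ by comparison. Applied globally, Lemma \ref{l2.3} gives $dx = 2^{-m}|s|^{n/2}\,d_*s\,dv$ for $x = vs^{1/2}$ with $v\in V_{n,m}$ and $s = x^tx\in\P_m$. Applied separately to the top $n-k$ and bottom $k$ rows $x_1, x_2$ of $x$ (both blocks generically have rank $m$, thanks to the hypothesis $m\le\min(k,n-k)$), the same lemma yields
$$dx = 2^{-2m}|p_1|^{(n-k)/2}|p_2|^{k/2}\,d_*p_1\,d_*p_2\,du_1\,du_2,$$
with block polar decompositions $x_j = u_j p_j^{1/2}$, $u_j$ in the corresponding Stiefel manifold.

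Next I would change variables from $(p_1,p_2)\in\P_m\times\P_m$ to $(s,r)$ via $p_1 = s^{1/2}rs^{1/2}$ and $p_2 = s^{1/2}(I_m-r)s^{1/2}$, with $0 < r < I_m$. The new variable $s = p_1+p_2 = x^tx$ is consistent with the $s$ of the global polar decomposition. Lemma \ref{l2.3z} (Zhang's identity) supplies the Jacobian $d_*p_1\,d_*p_2 = |I_m-r|^{-(m+1)/2}\,d_*s\,d_*r$. Substituting $|p_1|=|s||r|$, $|p_2|=|s||I_m-r|$, and $d_*r = |r|^{-(m+1)/2}\,dr$ recasts the block expression as
$$dx = 2^{-2m}|s|^{n/2}|r|^{(n-k-m-1)/2}|I_m-r|^{(k-m-1)/2}\,d_*s\,dr\,du_1\,du_2.$$

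The one step requiring care --- and the main conceptual point --- is to identify the polar factor $v = xs^{-1/2}$ with the bi-Stiefel form of \eqref{herz1}. A direct calculation gives
$$v = \begin{bmatrix} u_1 U_1 r^{1/2}\\ u_2 U_2(I_m-r)^{1/2}\end{bmatrix},$$
where each $U_j\in\mathrm{O}(m)$ is the orthogonal factor in the polar decomposition of the $m\times m$ matrix $p_j^{1/2}s^{-1/2}$ and thus depends on $r,s$. To put $v$ in the form stated, one absorbs $U_j$ into $u_j$ by right-multiplication; since $du_j$ is right $\mathrm{O}(m)$-invariant on $V_{n-k,m}$ and $V_{k,m}$, this substitution preserves $du_1\,du_2$. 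Equating the two expressions for $dx$ and cancelling $|s|^{n/2}\,d_*s$ then yields $dv = 2^{-m}|r|^{(n-m-k-1)/2}|I_m-r|^{(k-m-1)/2}\,dr\,du_1\,du_2$, which is exactly \eqref{2.11} with the claimed $d\mu(r)$. After this rotational reconciliation, the remainder is routine Jacobian bookkeeping.
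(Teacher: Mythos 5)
Your proposal is correct and is essentially identical to the paper's own proof: both split $x\in\frM_{n,m}$ into the blocks $x_1,x_2$, apply Lemma \ref{l2.3} to each block and to $x$ globally, use Lemma \ref{l2.3z} to pass from $(p_1,p_2)$ to $(s,r)$, absorb the orthogonal factors (your $U_j$, the paper's $\gamma_j$) into $u_j$ via right $\mathrm{O}(m)$-invariance of $du_j$, and compare the two expressions for $\int f(x)\,dx$. The Jacobian bookkeeping and the identification of the polar factor both check out.
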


 \begin{proof}For m=1, this  is a well-known  bi-spherical decomposition \cite[pp. 12,
 22]{VK2}.
For  $k=m$, see  \cite[p. 495]{Herz}, where the result was obtained
using  the Fourier transform technique and Bessel functions of matrix
argument. In \cite{GR},
Herz's proof was extended  to the form presented above and  it was conjectured that there is an alternative  simple proof that
does not need the Fourier transform. Such
a proof was given by  Zhang \cite{Zh}.
For convenience of the reader, we present it here in a slightly different notation.

The result will follow from the  bi-polar decomposition of the
Lebesgue measure on $\frM_{n,m}$. We split  $x \!\in\! \frM_{n,m}$ in
two blocks, so that $x\!=\!\left[\begin{array} {cc} x_1 \\ x_2
\end{array} \right]$, $x_1 \!\in \!\frM_{n-k,m}$, $x_2 \in
\frM_{k,m}$, and write each block in polar coordinates according to
Lemma \ref{l2.3}. This gives \bea \int_{\frM_{n,m}}
f(x)dx&=&\int_{V_{n-k,m}}\,du_1\int_{V_{k,m}}\,du_2\nonumber
\\&\times&\int_{\p \times \p} f \left (\left[\begin{array} {cc} u_1
p_1^{1/2}
\\u_2 p_2^{1/2}\end{array}
\right]\right ) h(p_1, p_2) \,d_*p_1\, d_*p_2,\eea
\be h(p_1, p_2) =2^{-2m} |p_1|^{(n-k)/2}|p_2|^{k/2}.\ee
By (\ref{eit}), the integral over $\p \times
 \p$  transforms as
\be\label{eita} \int_0^{I_m}\,dr\int_{\p}f \left
(\left[\begin{array} {cc} u_1 (s^{1/2}r s^{1/2})^{1/2}
\\u_2 (s^{1/2}(I_m -r)s^{1/2})^{1/2}\end{array}
\right]\right ) \tilde h(r,s) \,\,d_*s,\ee
where $$
\tilde h(r,s)=2^{-2m}|I_m
 -r|^{(k-m-1)/2}|r|^{(n-m-k-1)/2}|s|^{n/2}.
$$
Furthermore, one can write \[ (s^{1/2}r s^{1/2})^{1/2}=\gam_1
r^{1/2} s^{1/2}, \quad (s^{1/2}(I_m -r)s^{1/2})^{1/2}=\gam_2 (I_m
-r)^{1/2} s^{1/2},\] for some $\gam_1, \gam_2 \in \mathrm{O}(m)$ (just note
that $|\gam_1|=|\gam_2|=1$). Hence, changing the order of
integration, and using right $\mathrm{O} (m)$-invariance of $du_1$ and $du_2$,
we easily get
\bea \int_{\frM_{n,m}}
f(x)\, dx&=&2^{-m}\int_{\p}|s|^{n/2}\, d_*s\int_0^{I_m}\,  d\mu (r)
\nonumber
\\&\times& \label{niz}\int_{V_{n-k,m}}\,du_1\int_{V_{k,m}} f \left
(\left[\begin{array} {cc} u_1 r^{1/2}
\\u_2 (I_m -r)^{1/2}\end{array}
 \right]\,s^{1/2}\right ) \, du_2,\eea
where $d\mu (r)=2^{-m}|I_m
 -r|^{(k-m-1)/2}|r|^{(n-m-k-1)/2}\, dr$. On the other hand, by Lemma
 \ref{l2.3},
\be\label{nez}\int_{\frM_{n,m}} f(x)\, dx=2^{-m}\int_{\p}|s|^{n/2}\, d_*s
\int_{V_{n,m}} f (vs^{1/2}) \,dv.\ee Comparing (\ref{niz}) and
(\ref{nez}), we get the result.

\end{proof}

\section{Proof of Theorem \ref{teo}}

\noindent
We divide the proof into two parts.
\smallskip

\noindent \textbf{Part I.} Let $i\le \ell$, let $\sigma_i$ be as in
(\ref{sigmam}), and let
$$\kappa_i :  V_{n,i}=\mathrm{O}(n)/\mathrm{O}(n-i)\to
G_{n,i}=\mathrm{O}(n)/\mathrm{O}(n-i)\times \mathrm{O}(i) \, ,\quad
g\cdot \sigma_i \to g\mathbb{R}^{i}$$ be the canonical map.
 It is obviously
$\mathrm{O}(n)$-equivariant. We use this to identify a  $K_\ell$
-invariant function  $f$ on $G_{n,i}$ with the left
$K_\ell$-invariant and right $\mathrm{O}(i)$-invariant function
$\vp=f\circ \kappa_i$ on the Stiefel manifold $V_{n,i}$. Then
\be\label{sot} \int_{G_{n,i}} f(\xi)\,
d\xi=\frac{1}{\sig_{n,i}}\int_{V_{n,i}}\vp (\Theta)\, d\Theta.\ee By
Lemma \ref{l2.1} (with $m=i, \; k=\ell$), almost all $\Theta \in
V_{n,i}$ can be represented
as \be\label{sin} \Theta=\left[\begin{array} {cc} u_1 r^{1/2} \\
u_2 (I_i -r)^{1/2}
 \end{array} \right], \qquad u_1 \in V_{n-\ell, i}, \quad u_2\in V_{\ell,
 i}, \quad r \in (0, I_i). \ee
Since $\vp$ is left $K_\ell$ -invariant, it follows that $\vp$ is independent
of the bi-Stiefel coordinates $u$ and $v$ in (\ref{sin}). Thus we we can
write $\vp (\Theta)\equiv \vp_0 (r)$. By (\ref{2.11})
and (\ref{2.16}) we get
\be\label{spo}
\frac{1}{\sig_{n,i}}\int_{V_{n,i}}\vp (\Theta)\, d\Theta=c\,
\int_0^{I_i} \vp_0 (r)\, d\nu_1(r),\ee where \be d\nu_1(r)=|I_i
-r|^{(\ell -i-1)/2}|r|^{(n-\ell)/2}  \,d_*r,\ee \be
c=\frac{\sig_{n-\ell,i}\,\sig_{\ell,i}}{2^{i}\,
\sig_{n,i}}=\frac{\Gamma_i (n/2)}{\Gamma_i (\ell/2)\, \Gamma_i
((n-\ell)/2)}.\ee
Since $r$ can be expressed through $\Theta$ as
$
r=\Theta^t\sig \sig^t \Theta$
and $\vp$  is right
 $\mathrm{O}(i)$-invariant we obtain
\[
\vp_0 (r)= \vp_0 (\Theta^t\sig \sig^t \Theta)=\vp (\Theta)=\vp (\Theta
g)=\vp_0 (\g^t\Theta^t\sig \sig^t \Theta \g)= \vp_0 (\g^tr\g)\] for any
$\g\in
 \mathrm{O}(i)$. Hence, if we write $r$ in polar coordinates
\begin{equation}\label{b3} r=\g ^t\lv\g, \quad \g\in \mathrm{O}(i), \quad
\lv=\diag (\lam_1, \dots ,
\lam_i),
 \end{equation}
where $\lam_1, \dots , \lam_i$ are eigenvalues of $r$, we  get $
\vp_0 (r)=\vp_0 (\lv)$. Moreover, using the known formula for the
invariant measure
\be d_*r=c_i \,\prod\limits_{1\le
 j<k\le i}(\lam_j
-\lam_k)\Big(\prod\limits_{j=1}^i \lam_j^{-(i+1)/2}\,d\lam_j \Big)
d\g \ee
where
$$ c_i=\pi^{(i^2+i)/4}\Big (\prod\limits_{j=1}^i
j\;\Gam(j/2)\Big )^{-1},
$$(see \cite[ p. 23, 43]{T}), we obtain \bea \int_0^{I_i} \vp_0
(r)\, d\nu_1(r)&=& c_i \int_{\Lam_i}
\vp_0 (\lv)\prod_{1\le j<k\le i}(\lam_j -\lam_k)\nonumber \\
&\times& \prod_{j=1}^i \lam_j^{(n-\ell -i -1)/2}\, (1-\lam_j)^{(\ell
-i -1)/2}\, d\lam_j.\nonumber \eea
 Combining this formula with
(\ref{sot}) and (\ref{spo}), we obtain (\ref{sss}).

Conversely, to every function $f_0$ on $\Lam_i$, we can assign a
function $ \vp_0$ on $(0, I_i)$ by the rule $\vp_0 (r)= f_0 (\lam_1,
\dots , \lam_i)$, where $\lam_1, \dots , \lam_i$ are eigenvalues of
$r$ arranged as $1\ge\lam_1\ge \dots \ge\lam_i\ge 0$. The function $
\vp_0$ is
 $\mathrm{O}(i)$-invariant, i.e.,
 \be\label{fora} \vp_0 (r)=\vp_0 (g^trg)\quad \text{\rm for any
$g\in \mathrm{O}(i)$},\ee because  $r$ and $g^trg$ have the same
eigenvalues. Next we define a function $\vp$ on $V_{n,i}$ by $\vp
(\Theta)=\vp_0 (\Theta^t\sig \sig^t \Theta)$, where
$\sig=\sigma_\ell \in V_{n,\ell}$. The function  $\vp$ is left $K_\ell$
-invariant, because for any $\gam \in K_\ell$ we have $\gam^t\sig
\sig^t \gam=\sig \sig^t$ and, therefore, $$ \vp (\gam\Theta)=\vp_0
(\Theta^t\gam^t\sig \sig^t \gam\Theta)=\vp_0 (\Theta^t\sig
\sig^t\Theta)=\vp (\Theta).$$ It remains to note that, owing to
(\ref{fora}), $\vp$ is right
 $\mathrm{O}(i)$-invariant and  can therefore be identified with a $K_\ell$-invariant function on $G_{n,i}$.
\smallskip

\noindent
\textbf{Part II.} Let $i\ge \ell$. Suppose that $f$ is a $K_\ell$
-invariant function on $G_{n,i}$, which is identified with a
function $\vp (\Theta)$ on   $V_{n,i}$ as in Part I. Let
$\Theta=\left[\begin{array} {ll} a
\\ b \end{array}
 \right]$, $a \in \frM_{n-\ell, i}$, $b=\sig^t\Theta\in \frM_{\ell,
 i}$. Since $n-\ell \ge i$, by Lemma \ref {l2.3},  we can write $a$ in polar
 coordinates as $a=vr^{1/2}$, $v \in V_{n-\ell,i}$, $r=a^ta=I_i - b^tb$.
 Fix any $v_0$ in $V_{n-\ell,i}$ and set $v=\a v_0$, $\a\in \mathrm{O}(n-\ell)$.
 Then $$
\Theta=\left[\begin{array} {cc}\a v_0 r^{1/2}
\\ b \end{array}
 \right]= \left[\begin{array}{ll}  \a&0\\
0&I_\ell
\end{array}\right]
 \left[\begin{array} {cc} v_0 (I_i -b^tb)^{1/2}
\\ b \end{array}
 \right].$$
Since $\vp$ is left $K_\ell$ -invariant, then \be\label{bir}
\vp(\Theta)=\vp\left (\left[\begin{array} {cc} v_0 (I_i -b^tb)^{1/2}
\\ b \end{array}
 \right]\right )\equiv \vp_1 (b), \qquad b=\sig^t\Theta.\ee We write
 the transpose of $b$ in polar coordinates $$b^t=us^{1/2}, \qquad
 u\in V_{i,\ell}, \qquad s=bb^t\in\P_\ell.$$
Then we fix any $u_0\in V_{i,\ell}$ and replace $u$ by $\b u_0$ for
some $\b\in \mathrm{O}(i)$. This gives $b=s^{1/2}u^t_0\b^t$ and
therefore, since $\vp$  is right
 $\mathrm{O}(i)$-invariant, $$
\vp (\Theta)=\vp (\Theta\b)=\vp_1 (\sig^t\Theta\b)=\vp_1 (b\b)=\vp_1
(s^{1/2}u^t_0\b^t\b)=\vp_1 (s^{1/2}u^t_0)\, .$$
It means that $\vp
(\Theta)$ is actually a function of $s$. Denote it by $\vp_0
(s)$. Since $b=\sig^t\Theta$ the positive definite matrix
$s=bb^t=\sig^t\Theta\Theta^t\sig=\sig^t{\rm Pr}_\xi \sig$ lies in
the ``interval" $(0, I_\ell)$. Here ${\rm Pr}_\xi \sig$ denotes the
orthogonal projection of $\sig\in V_{n,\ell}$ onto the subspace
$\xi\in G_{n,i}$. Thus
$$ f(\xi)\equiv \vp(\Theta)=\vp_0
(\sig^t\Theta\Theta^t\sig)=\vp_0 (\sig^t{\rm Pr}_\xi \sig)\, .$$ Since
$\vp$ is left-invariant under left translation by
$\tilde \b=\left[\begin{array}{cc}  I_{n-\ell}&0\\
0&\b
\end{array}\right]$, $\b\in \mathrm{O}(\ell)$, i.e., $f(\tilde \b
\xi)=f(\xi)$, it follows that $\vp_0 (\sig^t{\rm Pr}_{\tilde \b\xi} \sig)=\vp_0
(\sig^t{\rm Pr}_{\xi} \sig)$, and therefore
\be\label{the} \vp_0
(s)=\vp_0 (\sig^t{\rm Pr}_{\xi} \sig)=\vp_0 (\sig^t{\rm Pr}_{\tilde
\b\xi} \sig)=\vp_0 (\sig^t\tilde \b\Theta\Theta^t\tilde \b^t
\sig).\ee
Since
$$
\tilde \b^t \sig=\left[\begin{array}{cc}  I_{n-\ell}&0\\
0&\b^t
\end{array}\right]\left[\begin{array} {ll} 0
\\ I_\ell \end{array}
 \right]=\left[\begin{array} {ll} 0
\\ \b^t \end{array}
 \right]=\left[\begin{array} {ll} 0
\\ I_\ell \end{array}
 \right]\b^t=\sig\b^t,$$
equation (\ref{the}) implies that for
all $\beta \in \mathrm{O}(\ell )$ we have
 \be\label{gugu}
 \vp_0
(s)=\vp_0 (\b\sig^t\Theta\Theta^t\sig\b^t)=\vp_0 (\b s\b^t)\, .\ee
Thus, $\vp_0$ depends only on the eigenvalues $\lam_1, \dots ,
\lam_\ell$ of $s=\sig^t{\rm Pr}_{\xi} \sig$, $\vp_0 (s)=\vp_0
(\diag(\lam_1, \dots , \lam_\ell))$. Finally, we write
$$f(\xi)=\vp_0
(\diag(\lam_1, \dots , \lam_\ell))\equiv f_0(\lam_1, \dots , \lam_\ell).$$

For the corresponding integral formula we have
\bea
I&=&\int_{G_{n,i}}f(\xi)\,
d\xi=\frac{1}{\sig_{n,i}}\int_{V_{n,i}}\vp (\Theta)\, d\Theta
\nonumber\\
&=&\frac{1}{\sig_{n,i}}\int_{V_{n,i}}\vp_0
(\sig^t\Theta\Theta^t\sig)\, d\Theta
\nonumber\\
&&\text{\rm(replace $\Theta$ by $g^t\sig_i, \; g\in \mathrm{O}(n),
\quad \sig_i=\left[\begin{array} {ll} 0
\\ I_i \end{array}
 \right]\in V_{n,i})$}\nonumber\\
&=&\int_{\mathrm{O}(n)} \vp_0 (\sig^tg^t\sig_i\sig^t_i g \sig)\,
dg=\frac{1}{\sig_{n,\ell}}\int_{V_{n,\ell}}\vp_0
(\Psi^t\sig_i\sig^t_i \Psi)\, d\Psi.\nonumber\eea
The last integral can be written in bi-Stiefel coordinates by setting
$$\Psi=\left[\begin{array} {cc} u_1  r^{1/2}
\\ u_2 (I_\ell -r)^{1/2}\end{array}
 \right], \qquad u_1\in V_{n-i,\ell}, \quad u_2 \in V_{i,\ell}, \quad r=\Psi^t\sig_i\sig^t_i \Psi. $$ Then Lemma \ref{l2.1} gives
\be\label{kar} I=c\,\int_0^{I_\ell}\vp_0 (r) \, d\nu_2 (r),\ee where
\be d\nu_2(r)=|I_\ell -r|^{(i-\ell -1)/2} |r|^{(n-\ell -i-1)/2}
\,dr,\ee \be c=\frac{\sig_{n-i,\ell}\, \sig_{i,\ell}}{2^\ell
\,\sig_{n,\ell}}=\frac{\Gamma_\ell (n/2)}{\Gamma_\ell (i/2)\,
\Gamma_\ell ((n-i)/2)}.\ee
Since $\vp_0$ is $\mathrm{O}(\ell)$-invariant (see
(\ref{gugu})), we can write  (\ref{kar}) in polar coordinates
in the required form (\ref{sss}).

Conversely, as in Part I, every function $f_0$ on $\Lam_\ell$ can be
associated with an $\mathrm{O}(\ell)$-invariant function $\vp_0$ on
$(0,I_\ell)$, and the latter generates a function $\vp
(\Theta)=\vp_0(\sig^t\Theta\Theta^t\sig)$. This function is left
$K_\ell$ -invariant on $V_{n,i}$. Indeed, let $\gam=\left[\begin{array}{ll}  \a&0\\
0&\b
\end{array}\right]\in K_\ell$, $\a \in \mathrm{O}(n-\ell)$, $\b \in \mathrm{O}(\ell)$.
Then $\vp (\gam\Theta)=\vp_0(\sig^t\gam\Theta\Theta^t\gam^t\sig)$.
Since $$
\sig^t\gam=[0,I_\ell]\left[\begin{array}{ll}  \a&0\\
0&\b
\end{array}\right]=[0,\b]=\b\sig^t,$$ then
 $\vp (\gam\Theta)=\vp_0(\b\sig^t\Theta\Theta^t\sig\b^t)=\vp_0(\sig^t\Theta\Theta^t\sig)=\vp
 (\Theta)$. Furthermore, $\vp
 (\Theta)$ is obviously right $\mathrm{O}(i)$-invariant, and therefore,
 can be identified with a $K_\ell$ -invariant function on $G_{n,i}$.


\begin{thebibliography}{[ASMR]}

\bibitem [Be] {Be} M. Berger, Les espaces symétriques noncompacts,  Ann.
Sci. École Norm. Sup. (3),  \textbf{74}  (1957), 85--177.

\bibitem [C] {C}  F. Chatelin, \textit{ Eigenvalues of matrices}, John
Wiley \& Sons. Inc., New York, (1993).

\bibitem [FK] {FK} J. Faraut, and A. Kor\'anyi, \textit{ Analysis on
symmetric cones}, Clarendon Press, Oxford, 1994.

\bibitem [FT] {FT} J. Faraut, and G. Travaglini, \textit{Bessel
functions associated with representations of formally real Jordan
algebras}, J. of Funct. Analysis, \textbf{71} (1987), 123--141.

\bibitem [FJ1] {FJ1} M. Flensted-Jensen, Spherical functions of a real semisimple Lie
group. A method of reduction to the complex case,  J. Funct. Anal.,
\textbf{30} (1978), 106--146.


\bibitem [FJ2] {FJ2}  \bysame, Discrete series for semisimple symmetric spaces,
  Ann. of Math. (2)  \textbf{111}  (1980),  253--311.



\bibitem [GR] {GR}  E. Grinberg,  B. Rubin,  \textit{Radon inversion on Grassmannians
 via G{\aa}rding-Gindikin fractional integrals}, Annals of Math.
 \textbf{159} (2004), 809--843.

\bibitem [GK1] {GK1} K.I. Gross, R.A. Kunze, \textit{Bessel
functions and representation theory, I}, J. Funct. Anal.,
\textbf{22} (1976), No. 2, 73--105.


\bibitem [H94] {H94} S. Helgason, Geometric analysis on symmetric
spaces,
  Amer. Math. Soc.,  Providence, RI, 1994.

\bibitem [H99] {H99}  \bysame,  The Radon transform,
Birkh\"auser, Boston, Second edition, (1999).


\bibitem [H00] {H00} \bysame,  Groups and Geometric Analysis: Integral Geometry,
Invariant Differential Operators, and Spherical Functions, AMS,
2000.

\bibitem [H01] {H01} \bysame, Differential Geometry, Lie Groups, and Symmetric
Spaces, AMS, 2001.

\bibitem [H01] {H01a} \bysame, Differential Geometry and Symmetric
Spaces, AMS, 2001.

\bibitem [Herz]{Herz} C. Herz, \textit{Bessel functions of matrix
argument}, Ann. of Math., \textbf{61} (1955), 474--523.

\bibitem[Ho1]{Hoo} B. Hoogenboom, \textit{The generalized Cartan decomposition
for a compact Lie group}, Preprint, Amsterdam 1983.

\bibitem[Ho2]{Hoo2}  \bysame, Intertwining functions on compact Lie
groups, Ph.D. thesis, Math. Centrum, Amsterdam, 1983.

 \bibitem [J] {J} A.T. James, \textit{Normal multivariate
analysis and the orthogonal group}, Ann.  Math. Statist.,
\textbf{25} (1954), 40--75.

\bibitem [Ma] {Ma} T. Matsuki,  Double coset decompositions of reductive Lie
groups arising from two involutions.  J. Algebra, \textbf{197}
(1997),  49--91.

\bibitem [Mu] {Mu} R.J. Muirhead, \textit{ Aspects of multivariate
statistical theory}, John Wiley \& Sons. Inc., New York, 1982.

\bibitem [R] {R} B. Rubin, \textit{The lower dimensional Busemann-Petty problem for bodies with the
generalized axial symmetry}, Preprint 2007, arXiv:math/0701317v2.


\bibitem [T] {T} A. Terras, \textit{Harmonic analysis on symmetric spaces
and applications}, Vol. II,  Springer, Berlin, 1988.


\bibitem [VK] {VK2} N. Ja. Vilenkin,  A. V.
Klimyk,  \textit{ Representations of Lie groups and Special
functions}, Vol. 2,  Kluwer Academic publishers,  Dordrecht,  1993.

\bibitem [Zh] {Zh} G. Zhang,  \textit{Radon transform on real, complex and
quaternionic Grassmannians},   Duke Math. J., \textbf{138}  (2007),
 137--160.

\end{thebibliography}
\end{document}